\documentclass[12pt,twoside]{amsart}
\usepackage{amsmath}
\usepackage{enumerate}
\usepackage{amsthm}
\usepackage{amsfonts}
\usepackage{amssymb}
\usepackage{latexsym}
\usepackage{mathrsfs}
\usepackage{amsmath}
\usepackage{amsthm}
\usepackage{amsfonts}
\usepackage{amssymb}
\usepackage{latexsym}
\usepackage{geometry}
\usepackage{dsfont}
\usepackage[dvips]{graphicx}
\usepackage{color}
\usepackage[all]{xy}

\date{}
\pagestyle{plain}
\textheight= 22 true cm \textwidth =15 true cm
\allowdisplaybreaks[4] \footskip=15pt
\renewcommand{\uppercasenonmath}[1]{}

\topmargin=27pt \evensidemargin0pt \oddsidemargin0pt
\usepackage{graphicx,amssymb}
\usepackage[all]{xy}
\usepackage{amsmath}

\allowdisplaybreaks
\usepackage{amsthm}
\usepackage{color}

\theoremstyle{plain}
\newtheorem{theorem}{Theorem}[section]
\newtheorem{proposition}[theorem]{Proposition}
\newtheorem{lemma}[theorem]{Lemma}
\newtheorem{corollary}[theorem]{Corollary}
\theoremstyle{definition}
\newtheorem{example}[theorem]{Example}
\newtheorem{definition}[theorem]{Definition}

\theoremstyle{definition}
\newtheorem*{acknowledgement}{Acknowledgement}

\theoremstyle{remark}




\newcommand{\pf}{\noindent\begin {proof}}
\newcommand{\epf}{\end{proof}}

\newcommand{\Ext}{\mbox{\rm Ext}}
\newcommand{\Hom}{\mbox{\rm Hom}}
\newcommand{\Tor}{\mbox{\rm Tor}}

\def\m{{\frak m}}

\def\GV{{\rm GV}}
\def\tor{{\rm tor_{\rm GV}}}
\def\Hom{{\rm Hom}}
\def\Ext{{\rm Ext}}
\def\P{{\rm P}}
\def\Tor{{\rm Tor}}

\def\PF{{\rm PF}}

\def\Spec{{\rm Spec}}
\def\A{\mathcal{A}}
\def\B{\mathcal{B}}

\def\Mod{{\rm Mod}}
\def\E{{\rm E}}

\def\Nil{{\rm Nil}}

\def\NP{{\rm NP}}

\def\Z{{\rm Z}}

\def\GV{{\rm GV}}
\def\Max{{\rm Max}}
\def\DW{{\rm DW}}

\def\PvMR{{\rm PvMR}}
\def\PvMD{{\rm PvMD}}


\begin{document}
\begin{center}
{\large  \bf A new version of \P-flat modules and its applications}

\vspace{0.5cm}   Wei Qi$^{a}$, Xiaolei Zhang$^{a}$\\

{\footnotesize a.\  School of Mathematics and Statistics, Shandong University of Technology, Zibo 255049, China\\


Corresponding Author: Xiaolei Zhang, E-mail: zxlrghj@163.com\\}
\end{center}

\bigskip
\centerline { \bf  Abstract}
\bigskip
\leftskip10truemm \rightskip10truemm \noindent

In this paper, we introduce and study the class of $\phi$-$w$-\P-flat modules, which can be seen as generalizations of both $\phi$-\P-flat modules and $w$-\P-flat modules. In particular, we obtain that the class of $\phi$-$w$-\P-flat modules is covering. We also utilize the class of $\phi$-$w$-\P-flat modules to characterize $\phi$-von Neumann regular rings,  strong $\phi$-rings and $\phi$-PvMRs.
\vbox to 0.3cm{}\\
{\it Key Words:} $\phi$-$w$-\P-flat module; $\phi$-von Neumann regular ring; $\phi$-PvMR;  strong $\phi$-ring.\\
{\it 2020 Mathematics Subject Classification:}  13C11.

\leftskip0truemm \rightskip0truemm
\bigskip

Throughout this paper, all rings are commutative with identity and all modules are unitary.

It is well-known that the class of flat modules is one of the three classical classes of modules. Some important rings can be characterized by flat modules, such as von Neumann regular rings,  Pr\"{u}fer domains, and coherent rings. The generalizations of flat modules attracted many algebraists. For example, the notion of torsion-free modules can be seen as a generalization of that of flat modules over domains. The notion of \P-flat modules over commutative rings, introduced by Li and Wang \cite{LW04}, can also be seen as the generalization of that of torsion-free modules over domains. \P-flat modules are important in studying some rings. For example, a ring $R$ is von Neumann regular if and only if every $R$-module is \P-flat; a domain $R$ is Pr\"{u}fer if and only if every \P-flat module is flat (see Cheniour and Mahdou \cite{CM15}); a ring $R$ is a \PF\ ring if and only if every (principal) ideal is \P-flat.

The semi-star operation is a very important tool to extend various of commutative rings. The $w$-operation over domains was introduced by  Wang \cite{fm97} in 1997, and was extended to commutative rings by Yin et al. \cite{hfxc11} in 2010. Since then, using $w$-operations to construct new rings and modules is a fresh approach to algebraists. In 2014, Kim and Wang \cite{WK14} introduced and studied $w$-flat modules over commutative rings. In 2015, Wang and Qiao \cite{fq15} introduced the $w$-weak global dimensions of rings, and showed that PvMDs are exactly domains with $w$-weak global dimensions at most $1$.  The $w$-\P-modules recently introduced by Xia and Qiao \cite{X22} can be seen as a $w$-version of \P-flat modules. The authors \cite{X22} showed that a ring $R$ is a von Neumann regular ring if and only if every $R$-module is $w$-\P-flat; a domain $R$  is a DW-domain if and only if  any $w$-\P-flat module is \P-flat. They \cite{X22} also studied rings in which any  principal  ideal  is $w$-\P-flat.

The $\phi$-ring, introduced by Badawi \cite{A97} at the end of the last century,  is an important extension of   domains. Some classical results over domains can be generalized to $\phi$-rings. To generalize some results on $w$-operations over domains to $\phi$-ring, the second named author and Zhao \cite{ZZ21}  introduced $\phi$-$w$-flat modules over $\phi$-rings and $\phi$-PvMRs. In particular, the authors \cite{ZZ21} proved that a  strong $\phi$-ring $R$ is $\phi$-PvMR if and only if  any  any $R$-ideal is $\phi$-$w$-flat. However, the natural question is:
\begin{center}
\textbf{A $\phi$-ring $R$ is a $\phi$-PvMR if and only if  any   ideal of is $\phi$-$w$-flat?}
\end{center}
To answer this question, we generalized $w$-\P-flat modules to get a new class of modules: $\phi$-$w$-\P-flat modules. We utilize the class of $\phi$-$w$-\P-flat modules to characterize $\phi$-von Neumann  regular rings,  domains, $\DW$-rings and strong $\phi$-rings. Specially, we prove a $\phi$-ring $R$ is $\phi$-PvMR and a  strong $\phi$-ring  if and only if  any  ideal $R$  is $\phi$-$w$-flat, and so give a negative answer to the above question.

\section{$\phi$-$w$-$P$-flat modules and their basic properties}

First, we recall some baisc knowledge on $w$-operations (see \cite{fk16} for more details).  Let $R$ be a ring and  $J$ be a  finitely generated ideal of $R$.  If the multiplicative map $R\rightarrow \Hom_R(J,R)$  is an isomorphism,  then $J$ is called to be a $\GV$-ideal. The set of all $\GV$-ideals is denoted by $\GV(R)$.  Suppose $M$ is an $R$-module. Denote by
\begin{center}
$\tor(M):=\{x\in M\mid\ \mbox{there is}\ J\in \GV\ \mbox{ such that }\ Jx=0\}.$
\end{center}
If $\tor(M)=M$ (resp.,  $\tor(M)=0$),  then $M$ is said to be $\GV$-torsion  (resp., $\GV$-torsion-free).  Now, suppose $M$ is a $\GV$-torsion-free $R$-module.   If  $\Ext_R^1(R/J,M)$ $=0$ for  any $J\in \GV(R)$,  then $M$ is called a $w$-module. The $w$-envelope of a $\GV$-torsion-free $M$ is defined as follows:
\begin{center}
{\rm $M_w:=\{x\in \E_R(M)\mid\ \mbox{there is }\ J\in \GV\ \mbox{such that}\ Jx\subseteq M\}$,}
\end{center}
where $\E_R(M)$ is the injective envelope of $M$. Obviously, a $\GV$-torsion-free module $M$ is a $w$-module  if and only if $M_w=M$.  If every $R$-module  is $w$-module,  then $R$ is a $\DW$-ring.
The maximal $w$-ideal  is the maximal $w$-submodule of $R$. The set of all $w$-ideals is denoted by $w$-$\Max(R)$. Notice that $w$-$\Max(R)$ is a nonempty of $\Spec(R)$. An $R$-homomorphism $f:M\rightarrow N$ is called a $w$-monomorphism (Resp.,
$w$-epimorphism, $w$-isomorphism), if $f_\m:M_\m\rightarrow N_\m$ is a monomorphism (Resp., an epimorphism, an isomorphism) as $R_\m$-modules for  any  $\m\in w$-$\Max(R)$.  If, for  any  $\m\in w$-$\Max(R)$, $A_\m\rightarrow B_\m\rightarrow C_\m$ is exact as $R_\m$-modules,  then the
$R$-sequence $A\xrightarrow{f} B\xrightarrow{g} C$ is said to be $w$-exact.

Recall from \cite{CM13}, an $R$-module $M$ is said to be \P-flat, if  $(r, m)\in R\times M$ with $rm = 0$ implies $m\in(0 :_R r)M$.  Suppose $r\in R$.  Then  by \cite[Proposition 1]{H60},  $$\Tor^R_1(M, R/rR)=\{m \in M|rm=0\}/(0 :_R r)M.$$ Hence, an $R$-module $M$ is \P-flat if and only if   $\Tor_1^R(R/Rr,M)=0$ for  any $r\in R$. Recall from \cite{X22}, an $R$-module $M$ is said to be $w$-\P-flat, if  for  any  satisfying $rm = 0$ with $(r, m)\in R\times M$
, there exists $J\in\GV(R)$ such that $Jm\subseteq (0 :_R r)M$.  Hence, an $R$-module $M$ is $w$-\P-flat if and only if   $\Tor_1^R(R/Rr,M)$ is $\GV$-torsion for  any $r\in R$.

\begin{definition}\label{naga-lucas}
Let $R$ be a  ring and $M$ be an $R$-module.   If $\Tor_1^R(R/Ra,M)$ is $\GV$-torsion for  any  non-nilpotent element $a\in R$,  then $M$ is said to be $\phi$-$w$-\P-flat.
\end{definition}
Trivially, $\GV$-torsion modules and $w$-\P-flat modules are $\phi$-$w$-\P-flat. Next, we will give a characterization of $\phi$-$w$-\P-flat modules.
\begin{theorem}\label{c-t-pf-l}
Let $M$ be an $R$-module.  Then the following statements are equivalent:
\begin{enumerate}[(1)]
    \item $M$ is $\phi$-$w$-\P-flat;
    \item  for  any  non-nilpotent $a\in R$, the natural $R$-homomorphism $Ra\otimes_RM\rightarrow R\otimes_RM$ is a $w$-monomorphism;
    \item  for  any  non-nilpotent element $a$ satisfying  $am=0$  with $(a,m)\in R\times M$, there exists  an ideal  $J\in \GV(R)$ such that $Jm\subseteq (0:_Ra)M$.
\end{enumerate}
\end{theorem}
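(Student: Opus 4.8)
The plan is to split the three-way equivalence into a routine reformulation, (1)$\Leftrightarrow$(3), and a slightly more structural equivalence, (1)$\Leftrightarrow$(2), with the latter carrying the real content.

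First I would dispatch (1)$\Leftrightarrow$(3) directly from the $\Tor$ formula quoted before Definition \ref{naga-lucas}. For any $a\in R$,
$$\Tor_1^R(R/Ra,M)=\{m\in M\mid am=0\}/(0:_Ra)M.$$
A module is $\GV$-torsion exactly when each of its elements is annihilated by some $\GV$-ideal. Reading this off the displayed quotient, $\Tor_1^R(R/Ra,M)$ is $\GV$-torsion if and only if for every $m$ with $am=0$ there is $J\in\GV(R)$ with $Jm\subseteq(0:_Ra)M$. Quantifying over non-nilpotent $a$, this is precisely the assertion that (1) and (3) coincide, so this part is merely an unwinding of definitions.

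The core is (1)$\Leftrightarrow$(2). For a non-nilpotent $a$ I would tensor the short exact sequence $0\to Ra\to R\to R/Ra\to 0$ with $M$ and read the long exact $\Tor$-sequence. Since $R$ is free, $\Tor_1^R(R,M)=0$, so the connecting homomorphism identifies $\Tor_1^R(R/Ra,M)$ with the kernel of the natural map $Ra\otimes_RM\to R\otimes_RM$. Thus controlling this kernel is literally the same as controlling $\Tor_1^R(R/Ra,M)$, and the task reduces to translating ``$w$-monomorphism'' into a statement about that kernel.

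The main obstacle, and the single lemma on which everything hinges, is the equivalence ``$f$ is a $w$-monomorphism $\Leftrightarrow$ $\ker f$ is $\GV$-torsion.'' I would prove it (or cite it from \cite{fk16}) as follows: localization is exact, so $(\ker f)_\m=\ker(f_\m)$ for every $\m\in w\mbox{-}\Max(R)$, whence $f$ is a $w$-monomorphism iff $(\ker f)_\m=0$ for all $\m\in w\mbox{-}\Max(R)$; and a module $N$ is $\GV$-torsion iff $N_\m=0$ for all $\m\in w\mbox{-}\Max(R)$. Applying this to $f\colon Ra\otimes_RM\to R\otimes_RM$ with $\ker f\cong\Tor_1^R(R/Ra,M)$ converts (2) into the statement that $\Tor_1^R(R/Ra,M)$ is $\GV$-torsion for every non-nilpotent $a$, which is exactly (1). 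This closes the cycle and completes the proof.
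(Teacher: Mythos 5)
Your proposal is correct and follows essentially the same route as the paper: the equivalence $(1)\Leftrightarrow(3)$ via Hattori's formula $\Tor_1^R(R/Ra,M)\cong \{m\in M\mid am=0\}/(0:_Ra)M$, and the equivalence $(1)\Leftrightarrow(2)$ by tensoring $0\to Ra\to R\to R/Ra\to 0$ with $M$ and identifying $\Tor_1^R(R/Ra,M)$ with the kernel of $Ra\otimes_RM\to R\otimes_RM$. The only difference is that you make explicit the lemma the paper leaves implicit (a map is a $w$-monomorphism if and only if its kernel is $\GV$-torsion, both being detected by localization at maximal $w$-ideals), which is a welcome clarification but not a different argument.
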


\begin{proof} $(1)\Leftrightarrow (2)$  Let $a$ be a  non-nilpotent element in $R$.  Then there exists an exact sequence $0=\Tor_1^R(R,M)\rightarrow \Tor_1^R(R/Ra,M)\rightarrow Ra\otimes_RM\rightarrow R\otimes_RM$.  So $M$ is $w$-\P-flat if and only if  the natural $R$-homomorphism $Ra\otimes_RM\rightarrow R\otimes_RM$ is $w$-monomorphism.

$(1)\Leftrightarrow (3)$:  It follows by the following natural isomorphism (\cite[Proposition 1]{H60}): $$\Tor_1^R(R/Ra,M)\cong \{m\in M\mid am = 0\}/(0:_Ra)M.$$
\end{proof}

\begin{proposition}\label{c-t-pf-2}
 Suppose $0\rightarrow A\rightarrow B\rightarrow C\rightarrow 0$ is $w$-exact sequence of $R$-modules.  If $A$ and $C$ are $\phi$-$w$-\P-flat,  then $B$  is also $\phi$-$w$-\P-flat.
\end{proposition}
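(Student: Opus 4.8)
The plan is to test $\phi$-$w$-\P-flatness locally at maximal $w$-ideals, where the given $w$-exact sequence becomes an honest short exact sequence and the long exact $\Tor$ sequence becomes available. The reduction rests on the standard fact that a module $N$ is $\GV$-torsion if and only if $N_\m=0$ for every $\m\in w$-$\Max(R)$. Indeed, if $N$ is $\GV$-torsion then each $x\in N$ is annihilated by some $J\in\GV(R)$; since $J_w=R$ we have $J\not\subseteq\m$, so there is $s\in J\setminus\m$ with $sx=0$, whence $x/1=0$ in $N_\m$. Conversely, if all such localizations vanish then $\ann_R(x)$ is contained in no maximal $w$-ideal, so $(\ann_R(x))_w=R$ and $\ann_R(x)$ therefore contains a $\GV$-ideal, making $x$ (and hence $N$) $\GV$-torsion. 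Combining this with the flatness of localization, which gives $\Tor_1^R(R/Ra,M)_\m\cong\Tor_1^{R_\m}(R_\m/R_\m a,M_\m)$, I obtain the working criterion: for a fixed non-nilpotent $a\in R$, the module $\Tor_1^R(R/Ra,M)$ is $\GV$-torsion precisely when $\Tor_1^{R_\m}(R_\m/R_\m a,M_\m)=0$ for every $\m\in w$-$\Max(R)$.

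With this in hand, I would fix a non-nilpotent $a\in R$ and a maximal $w$-ideal $\m$. Because $0\rightarrow A\rightarrow B\rightarrow C\rightarrow 0$ is $w$-exact, localizing at $\m$ yields a genuine short exact sequence $0\rightarrow A_\m\rightarrow B_\m\rightarrow C_\m\rightarrow 0$ of $R_\m$-modules. Applying the functor $\Tor^{R_\m}_\bullet(R_\m/R_\m a,-)$ then produces the exact fragment $\Tor_1^{R_\m}(R_\m/R_\m a,A_\m)\rightarrow\Tor_1^{R_\m}(R_\m/R_\m a,B_\m)\rightarrow\Tor_1^{R_\m}(R_\m/R_\m a,C_\m)$.

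Since $A$ and $C$ are $\phi$-$w$-\P-flat, the criterion of the first paragraph forces both outer terms to vanish for our fixed $a$ and $\m$, and exactness then forces the middle term $\Tor_1^{R_\m}(R_\m/R_\m a,B_\m)$ to be zero as well. As $\m$ ranges over $w$-$\Max(R)$, this shows $\Tor_1^R(R/Ra,B)$ is $\GV$-torsion, and as $a$ ranges over the non-nilpotent elements of $R$, we conclude that $B$ is $\phi$-$w$-\P-flat.

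The one point requiring care is the reduction itself: I must be sure that $w$-exactness is exactly the hypothesis needed to turn the given sequence into short exact sequences after localization (so that the long exact $\Tor$ sequence genuinely applies), and that $\phi$-$w$-\P-flatness of $A$ and $C$ transfers, for the same non-nilpotent $a$, to the vanishing of the localized $\Tor_1$. Once this local criterion is established, the extension-closure of the $\phi$-$w$-\P-flat class is immediate. One could instead argue directly with the $\GV$-torsion condition, using that $\GV$-torsion modules are closed under submodules, quotients and extensions, but accommodating $w$-exactness rather than ordinary exactness is handled most transparently pointwise at each $\m$.
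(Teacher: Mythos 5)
Your proof is correct and takes essentially the same route as the paper's: the paper's proof simply asserts that $\Tor_1^R(R/Ra,A)\rightarrow \Tor_1^R(R/Ra,B)\rightarrow \Tor_1^R(R/Ra,C)$ is $w$-exact and that a module $w$-sandwiched between two $\GV$-torsion modules is $\GV$-torsion, and your localization at each $\m\in w$-$\Max(R)$ (via the criterion that $N$ is $\GV$-torsion iff $N_\m=0$ for all such $\m$, together with $\Tor$ commuting with localization) is precisely the justification of those two steps. You have merely made explicit the details the paper leaves implicit.
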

\begin{proof} Suppose $a$ is a non-nilpotent element in $R$.  Then there is a $w$-exact sequence  $$\Tor_1^R(R/Ra,A)\rightarrow \Tor_1^R(R/Ra,B)\rightarrow \Tor_1^R(R/Ra,C).$$ Since $A$ and $C$ are $\phi$-$w$-\P-flat, $\Tor_1^R(R/Ra,A)$ and $\Tor_1^R(R/Ra,C)$ are $\GV$-torsion.  So, $\Tor_1^R(R/Ra,B)$ is also $\GV$-torsion.  Hence, $B$ is $\phi$-$w$-\P-flat.
\end{proof}

\begin{proposition}\label{c-t-pf-2}
 Suppose $\{M_i\mid i\in\Lambda\}$ is a directed system of $\phi$-$w$-\P-flat modules.  Then $\lim\limits_{\longrightarrow}M_i$ is a $\phi$-$w$-\P-flat  module.
\end{proposition}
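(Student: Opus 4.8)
The plan is to verify the defining condition of Definition~\ref{naga-lucas} directly, exploiting the fact that the functor $\Tor_1^R(R/Ra,-)$ commutes with direct limits. Fix a non-nilpotent element $a\in R$. Since tensor products commute with direct limits and direct limits of $R$-modules over a directed index set are exact, the homology of the relevant complexes commutes with the colimit as well, yielding a natural isomorphism
$$\Tor_1^R\Big(R/Ra,\lim\limits_{\longrightarrow}M_i\Big)\cong \lim\limits_{\longrightarrow}\Tor_1^R(R/Ra,M_i).$$
By hypothesis each $M_i$ is $\phi$-$w$-\P-flat, so every term $\Tor_1^R(R/Ra,M_i)$ appearing on the right is $\GV$-torsion.

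Thus the problem reduces to showing that a direct limit of $\GV$-torsion modules is again $\GV$-torsion. I would check this by an element chase: given $x\in\lim\limits_{\longrightarrow}N_i$ with all $N_i$ being $\GV$-torsion, $x$ is the image of some $x_i\in N_i$ under the canonical map into the limit; choosing $J\in\GV(R)$ with $Jx_i=0$ (possible since $N_i$ is $\GV$-torsion), its image gives $Jx=0$, so every element of the limit is annihilated by a $\GV$-ideal. Applying this with $N_i=\Tor_1^R(R/Ra,M_i)$ shows the right-hand side above is $\GV$-torsion, hence so is $\Tor_1^R(R/Ra,\lim\limits_{\longrightarrow}M_i)$. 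As $a$ was an arbitrary non-nilpotent element, $\lim\limits_{\longrightarrow}M_i$ is $\phi$-$w$-\P-flat.

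I do not anticipate a genuine obstacle; both inputs are standard. The one point that warrants care is the commutation of $\Tor$ with the direct limit, which rests on the exactness of direct limits over a ring. Once that isomorphism is in hand, the closure of the $\GV$-torsion class under direct limits (equivalently, the fact that the $\GV$-torsion modules form a hereditary torsion class, closed under quotients and arbitrary direct sums and hence under direct limits) finishes the argument at once.
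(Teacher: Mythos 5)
Your proposal is correct and follows essentially the same route as the paper: both fix a non-nilpotent $a$, invoke the isomorphism $\Tor_1^R(R/Ra,\lim\limits_{\longrightarrow}M_i)\cong \lim\limits_{\longrightarrow}\Tor_1^R(R/Ra,M_i)$, and conclude from the fact that direct limits of $\GV$-torsion modules are $\GV$-torsion. The only difference is that you spell out the element-chase verification of this last closure property, which the paper leaves implicit.
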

\begin{proof} Suppose $a$ is  a non-nilpotent element in $R$, $\{M_i\mid i\in\Lambda\}$ is a directed system of $\phi$-$w$-\P-flat modules.   Then  $\Tor_1^R(R/Ra,M_i)$  is $\GV$-torsion  for  any $i\in\Lambda$.  So $\Tor_1^R(R/Ra,\lim\limits_{\longrightarrow}M_i)\cong \lim\limits_{\longrightarrow}\Tor_1^R(R/Ra,M_i)$ is also $\GV$-torsion.  Hence $\lim\limits_{\longrightarrow}M_i$ is $\phi$-$w$-\P-flat.
\end{proof}

\begin{proposition}\label{c-t-pf-2}
 Suppose $\{M_i\mid i\in\Lambda\}$ is a set of $\phi$-$w$-\P-flat modules.   Then $\bigoplus\limits_{i\in\Lambda}M_i$ is $\phi$-$w$-\P-flat if and only if   $M_i$ is $\phi$-$w$-\P-flat for  any $i\in\Lambda$.
\end{proposition}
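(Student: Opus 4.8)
The plan is to reduce everything to the behavior of the functor $\Tor_1^R(R/Ra,-)$ under direct sums, together with two closure properties of the class of $\GV$-torsion modules. First I would fix a non-nilpotent element $a\in R$ and invoke the natural isomorphism $\Tor_1^R(R/Ra,\bigoplus_{i\in\Lambda}M_i)\cong\bigoplus_{i\in\Lambda}\Tor_1^R(R/Ra,M_i)$, which holds because $\Tor$ commutes with arbitrary direct sums in its second variable. By Definition \ref{naga-lucas}, the statement then becomes: $\bigoplus_{i\in\Lambda}\Tor_1^R(R/Ra,M_i)$ is $\GV$-torsion for every non-nilpotent $a$ if and only if each $\Tor_1^R(R/Ra,M_i)$ is $\GV$-torsion.

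For the forward implication, suppose the direct sum is $\phi$-$w$-\P-flat. Each $\Tor_1^R(R/Ra,M_i)$ is a direct summand, hence a submodule, of the $\GV$-torsion module $\bigoplus_{i\in\Lambda}\Tor_1^R(R/Ra,M_i)$. Since a submodule of a $\GV$-torsion module is again $\GV$-torsion (any element of the submodule is killed by some $J\in\GV(R)$ because it is killed there as an element of the ambient module), every $\Tor_1^R(R/Ra,M_i)$ is $\GV$-torsion, and so each $M_i$ is $\phi$-$w$-\P-flat.

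For the converse, assume each $M_i$ is $\phi$-$w$-\P-flat, so each $\Tor_1^R(R/Ra,M_i)$ is $\GV$-torsion. The key point is that a direct sum of $\GV$-torsion modules is $\GV$-torsion: given $x=(x_i)\in\bigoplus_{i\in\Lambda}\Tor_1^R(R/Ra,M_i)$, only finitely many coordinates $x_{i_1},\dots,x_{i_n}$ are nonzero, each annihilated by some $J_k\in\GV(R)$; then the product $J=J_1\cdots J_n$ again lies in $\GV(R)$ and satisfies $Jx=0$, so $x\in\tor\big(\bigoplus_{i\in\Lambda}\Tor_1^R(R/Ra,M_i)\big)$. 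Hence the direct sum is $\GV$-torsion and $\bigoplus_{i\in\Lambda}M_i$ is $\phi$-$w$-\P-flat.

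The only substantive input is the multiplicativity of $\GV(R)$, namely that a finite product of $\GV$-ideals is again a $\GV$-ideal, which guarantees a single common annihilator from $\GV(R)$ for any element of the direct sum; this is the step I expect to be the crux of the converse, and it is a standard fact in the theory of $w$-operations (see \cite{fk16}). Everything else is formal bookkeeping once the $\Tor$-direct-sum isomorphism is in place.
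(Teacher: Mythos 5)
Your proof is correct and follows the same route as the paper: the paper's entire argument is the isomorphism $\Tor_1^R(R/Ra,\bigoplus_{i\in\Lambda}M_i)\cong\bigoplus_{i\in\Lambda}\Tor_1^R(R/Ra,M_i)$, leaving the closure properties of $\GV$-torsion modules implicit. You simply make explicit what the paper omits, namely that submodules of $\GV$-torsion modules are $\GV$-torsion and that direct sums of $\GV$-torsion modules are $\GV$-torsion via the multiplicativity of $\GV(R)$, both of which are standard facts from \cite{fk16}.
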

\begin{proof} Suppose $a$ is a non-nilpotent element in $R$. It follows by $$\Tor_1^R(R/Ra,\bigoplus\limits_{i\in\Lambda}M_i)\cong \bigoplus\limits_{i\in\Lambda}\Tor_1^R(R/Ra,M_i).$$
\end{proof}

\begin{proposition}\label{c-t-pf-2}
The pure submodules and pure quotients of $\phi$-$w$-\P-flat modules  are $\phi$-$w$-\P-flat.
\end{proposition}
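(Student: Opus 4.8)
The plan is to run the long exact sequence of $\Tor$ against the defining feature of purity, namely that tensoring a pure exact sequence $0\to A\to B\to C\to 0$ with an arbitrary module preserves exactness. Throughout I fix a non-nilpotent element $a\in R$ and set $N=R/Ra$. Since the class of $\GV$-torsion modules is closed under submodules and quotient modules, it suffices to exhibit each of $\Tor_1^R(N,A)$ and $\Tor_1^R(N,C)$ as, respectively, a submodule and a quotient of the $\GV$-torsion module $\Tor_1^R(N,B)$, where I use that $B$ is $\phi$-$w$-\P-flat.

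For the pure quotient $C$, I would apply $N\otimes_R-$ to obtain the long exact sequence
$$\Tor_1^R(N,B)\to \Tor_1^R(N,C)\xrightarrow{\ \partial\ } N\otimes_R A\to N\otimes_R B.$$
Because the sequence is pure, the map $N\otimes_R A\to N\otimes_R B$ is a monomorphism, so $\partial=0$ and hence $\Tor_1^R(N,B)\to\Tor_1^R(N,C)$ is an epimorphism. As $\Tor_1^R(N,B)$ is $\GV$-torsion, so is its quotient $\Tor_1^R(N,C)$, and therefore $C$ is $\phi$-$w$-\P-flat.

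For the pure submodule $A$, the goal is to show that the natural map $\Tor_1^R(N,A)\to\Tor_1^R(N,B)$ is a monomorphism; then $\Tor_1^R(N,A)$ embeds in the $\GV$-torsion module $\Tor_1^R(N,B)$ and is itself $\GV$-torsion. To prove the injectivity I would pick a free presentation $0\to K\to F\to N\to 0$ and compare the two rows obtained by tensoring it with $A$ and with $B$. Since $F$ is flat, $\Tor_1^R(N,-)$ is identified with $\ker\big(K\otimes_R-\to F\otimes_R-\big)$; and since $0\to A\to B\to C\to 0$ is pure, each of the vertical comparison maps $K\otimes_R A\to K\otimes_R B$ and $F\otimes_R A\to F\otimes_R B$ is injective. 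A diagram chase on the resulting commutative square then forces $\Tor_1^R(N,A)\to\Tor_1^R(N,B)$ to be injective. (Alternatively, one may invoke the classical fact that a pure exact sequence is a filtered colimit of split exact sequences, whence $\Tor$ carries it to a short exact sequence in every degree.)

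I expect this last injectivity for $A$ to be the main obstacle: unlike the surjectivity used for $C$, it is not a formal consequence of exactness at a single spot but genuinely uses purity tested against a presentation of $N$ (equivalently, the colimit-of-split characterization of purity). Once both exactness properties of $\Tor_1^R(N,-)$ along a pure sequence are established, the conclusion is immediate from closure of the $\GV$-torsion class under submodules and quotients, and the statement follows for every non-nilpotent $a$.
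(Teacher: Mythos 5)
Your proposal is correct and follows essentially the same route as the paper: the paper tensors the pure sequence with $0\to Ra\to R\to R/Ra\to 0$ to form a $3\times 3$ commutative diagram and applies the Snake Lemma, which packages your two steps (injectivity of $\Tor_1^R(R/Ra,A)\to\Tor_1^R(R/Ra,B)$ for the submodule and surjectivity of $\Tor_1^R(R/Ra,B)\to\Tor_1^R(R/Ra,C)$ for the quotient) into one stroke, with $0\to Ra\to R\to R/Ra\to 0$ playing exactly the role of your free presentation $0\to K\to F\to N\to 0$. Both arguments rest on the same two inputs --- purity makes every tensored row exact, and the class of $\GV$-torsion modules is closed under submodules and quotients --- so the difference is only bookkeeping.
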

\begin{proof} Suppose $a$ is  a non-nilpotent element in $R$.  Then we have an exact sequence $0\rightarrow Ra \rightarrow R\rightarrow R/Ra \rightarrow 0$.  Suppose $0\rightarrow M\rightarrow N\rightarrow L\rightarrow 0$ is pure exact sequence of $R$-modules with $N$  $\phi$-$w$-\P-flat. Then we have the following commutative diagram of  exact sequences:
$$\xymatrix{
   0 \ar[r]^{} & M\otimes_R Ra \ar[d]_{f}\ar[r]^{} & N\otimes_R Ra  \ar@{>->}[d]_{}\ar[r]^{} & L\otimes_R Ra \ar[d]_{g}\ar[r]^{} &  0\\
   0 \ar[r]^{} & M\otimes_R R \ar@{->>}[d]_{}\ar[r]^{} & N\otimes_R R  \ar@{->>}[d]_{}\ar[r]^{} & L\otimes_R R \ar@{->>}[d]_{}\ar[r]^{} &  0\\
   0 \ar[r]^{} & M\otimes_R  R/Ra\ar[r]^{} & N\otimes_R  R/Ra  \ar[r]^{} & L\otimes_R  R/Ra \ar[r]^{} &  0\\}$$
It follows by Snake Lemma that $f: M\otimes_R Ra \rightarrow M\otimes_R R$ and $g: L\otimes_R Ra\rightarrow L\otimes_R R$ are $w$-monomorphisms. It follows  by  Theorem \ref{c-t-pf-l} that $M$ and $L$ are $\phi$-$w$-\P-flat.
\end{proof}

Let $\mathcal{F}$ be a class of $R$-modules and $M$ be an $R$-module. Recall from \cite[Definition  5.1]{gt} that if there  exists an $R$-homomorphism $f: F(M)\rightarrow M$ with $F(M)\in \mathcal{F}$,
such that for  any $F\in \mathcal{F}$, $g: F\rightarrow M$ there exists $h: F\rightarrow F(M)$ such that the following diagram is commutative:
$$\xymatrix{
  &&F \ar[rd]^g  \ar@{.>}[ld]_h  \\
&F(M) \ar[rr]^f &&M
              }$$
Then  $f: F(M)\rightarrow M$ is called an $\mathcal{F}$-precover of $M$. Furthermore, if  the endomorphism $h$ of $F(M)$ satisfying $f=f\circ h$ is an automorphism,  then $f: F(M)\rightarrow M$ is called the $\mathcal{F}$- cover of $M$.  Dually, one can define the  $\mathcal{F}$-preenvelope and $\mathcal{F}$-envelope of $R$-modules.

Let $\A$ and $\B$ be classes of  $R$-modules. Denote by $\A^{\perp_1}=\{M\in R$-$\Mod\mid \Ext_R^1(F,M)=0,$  for  any  $F\in \A\}$, $^{\perp_1}\A=\{M\in R$-$\Mod\mid \Ext_R^1(M,F)=0$ for  any  $F\in \A\}.$   If $\A=^{\perp_1}\B$ and $\B=\A^{\perp_1}$,  then $(\A,\B)$ is called a cotorsion pair.  If  any $R$-module has an $\A$-cover and a $\B$-envelope,  then $(\A,\B)$ is said to be perfect. It is well-known that the class of flat modules is covering. The authors in
\cite{Z19,ZZW21} obtained the classes of $w$-flat modules and $\phi$-flat modules are covering. The next result shows that the class of  $\phi$-$w$-\P-flat modules is also a covering.
\begin{theorem}\label{gv-f-cover}
Let $R$ be  a  ring. Denote by $\phi$-$w$-$\mathscr{F}_{P}$ the class of $\phi$-$w$-\P-flat modules.  Then $(\phi$-$w$-$\mathscr{F}_{P}$,$\phi$-$w$-$\mathscr{F}_{P}^{\perp_1})$ is a perfect cotorsion pair.  Hence,  any $R$-module has a $\phi$-$w$-\P-flat cover.
\end{theorem}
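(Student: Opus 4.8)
The plan is to deduce the theorem from the Eklof--Trlifaj machinery for cotorsion pairs cogenerated by a set (see \cite{gt}). The closure properties already established are precisely the inputs this machinery requires: by the preceding propositions the class $\phi$-$w$-$\mathscr{F}_{P}$ is closed under $w$-exact (hence ordinary) extensions, under arbitrary direct sums and direct summands, under direct limits, and under pure submodules and pure quotients; moreover it contains $R$, since $R$ is flat and so $\Tor_1^R(R/Ra,R)=0$ for every non-nilpotent $a$. Combining extension-closure with direct-limit-closure, one gets that $\phi$-$w$-$\mathscr{F}_{P}$ is closed under transfinite extensions (filtrations) by the usual transfinite induction. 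Thus once the pair is exhibited as cogenerated by a set, completeness and then perfectness will follow formally.

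First I would produce the cogenerating set. Put $\kappa=|R|+\aleph_0$ and let $\mathcal{S}$ be a representative set of all $\phi$-$w$-\P-flat modules of cardinality at most $\kappa$, together with $R$. The heart of the argument is a Kaplansky-type filtration: every $\phi$-$w$-\P-flat module $M$ admits a continuous increasing chain $0=M_0\subseteq M_1\subseteq\cdots\subseteq M_\alpha\subseteq\cdots$ with $M=\bigcup_\alpha M_\alpha$ and each successive quotient $M_{\alpha+1}/M_\alpha$ lying in $\mathcal{S}$. To build it one enlarges a given small submodule $N_0\subseteq M$ of size $\le\kappa$ to a $\phi$-$w$-\P-flat submodule $N\subseteq M$ of size $\le\kappa$ with $M/N$ again $\phi$-$w$-\P-flat. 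The enlargement is a countable closing-off process: for each non-nilpotent $a$ and each relation $am=0$ occurring in the partial submodule one adjoins witnesses from a chosen $\GV$-ideal $J$ with $Jm\subseteq(0:_Ra)M$, as provided by Theorem \ref{c-t-pf-l}(3), and one continues adjoining elements until both $N$ and $M/N$ satisfy the defining $\GV$-torsion condition. Iterating transfinitely yields the desired $\mathcal{S}$-filtration.

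With the filtration in hand I would apply the Eklof--Trlifaj theorem to the set $\mathcal{S}$, obtaining a complete cotorsion pair $({}^{\perp_1}(\mathcal{S}^{\perp_1}),\mathcal{S}^{\perp_1})$, and then identify its left class with $\phi$-$w$-$\mathscr{F}_{P}$. The inclusion $\phi$-$w$-$\mathscr{F}_{P}\subseteq{}^{\perp_1}(\mathcal{S}^{\perp_1})$ is Eklof's lemma applied to the filtration above. Conversely, completeness gives, for any $N\in{}^{\perp_1}(\mathcal{S}^{\perp_1})$, a short exact sequence $0\to K\to A\to N\to 0$ with $A$ being $\mathcal{S}$-filtered and $K\in\mathcal{S}^{\perp_1}$; since $\Ext_R^1(N,K)=0$ this splits, so $N$ is a direct summand of $A$, and $A$ is $\phi$-$w$-\P-flat because the class is closed under transfinite extensions. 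Hence $\phi$-$w$-$\mathscr{F}_{P}={}^{\perp_1}(\mathcal{S}^{\perp_1})$, so $(\phi$-$w$-$\mathscr{F}_{P},\phi$-$w$-$\mathscr{F}_{P}^{\perp_1})$ is a complete cotorsion pair. Finally, since $\phi$-$w$-$\mathscr{F}_{P}$ is closed under direct limits, the complete cotorsion pair is perfect; in particular every $R$-module has a $\phi$-$w$-\P-flat cover.

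The main obstacle is the Kaplansky filtration step; everything after it is a formal invocation of \cite{gt}. The delicacy is that in passing to a small submodule $N$ one must simultaneously control the cardinality bound $\le\kappa$, the $\phi$-$w$-\P-flatness of $N$ itself, and the $\phi$-$w$-\P-flatness of the quotient $M/N$, where the $\GV$-witnesses needed for the quotient must be absorbed into $N$ without violating the size bound. Checking that the closing-off process stabilizes at cardinality $\kappa$ while securing all three conditions at once is the technical core of the argument.
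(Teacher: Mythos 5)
Your overall architecture---deconstruct the class into a set-indexed filtration, invoke Eklof--Trlifaj to get a complete cotorsion pair cogenerated by a set, identify the left class, then upgrade completeness to perfection via closure under direct limits---is sound, and it amounts to re-proving the result the paper simply cites: the paper's proof is one line, applying \cite[Theorem 3.4]{HJ08}, whose hypotheses are exactly the closure properties established in the preceding propositions (the class contains $R$ and is closed under extensions, direct sums and summands, direct limits, and pure submodules and pure quotients). However, your execution of the step you yourself identify as the technical core---the Kaplansky-type filtration---has a genuine gap. Your closing-off process adjoins $\GV$-witnesses for relations $am=0$ with $m$ in the partial submodule $N$: this does secure that $N$ is $\phi$-$w$-\P-flat, since each $\GV$-ideal $J$ is finitely generated and each generator $j_k$ satisfies $j_km=\sum b_ix_i$ with $b_i\in(0:_Ra)$ and finitely many $x_i\in M$ to absorb. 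It does nothing for the quotient: the defining condition for $M/N$ concerns elements $m\in M$ with $am\in N$ but $am\neq 0$ in $M$, and for such $m$ the $\phi$-$w$-\P-flatness of $M$ (via Theorem \ref{c-t-pf-l}) supplies no $\GV$-ideal and no witnesses whatsoever. Moreover, enlarging $N$ creates new cosets annihilated by non-nilpotent elements, so there is no reason the process ``continue adjoining elements until both $N$ and $M/N$ satisfy the condition'' terminates at all; that assertion is precisely what needs proof, and as described it has none.

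The repair is to close off for \emph{purity} instead, which is exactly what \cite[Theorem 3.4]{HJ08} (and the corresponding lemmas in \cite{gt}) do: with $\kappa=|R|+\aleph_0$, any subset of $M$ of cardinality at most $\kappa$ is contained in a pure submodule $N\subseteq M$ with $|N|\le\kappa$, by a closing-off on solutions of finite systems of linear equations, which does stabilize. Then $N$ is $\phi$-$w$-\P-flat because the class is closed under pure submodules, and $M/N$ is $\phi$-$w$-\P-flat because the class is closed under pure quotients---the paper's snake-lemma proposition. Note that purity is exactly what converts the problematic data $am\in N$ into a genuine relation: there is $n\in N$ with $an=am$, so $a(m-n)=0$ in $M$, and the $\GV$-ideal for $m-n$ maps into $(0:_Ra)(M/N)$; this is the conversion your process lacks. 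With this replacement your filtration exists (keeping, at limit ordinals, the invariant that $M/M_\lambda\cong\varinjlim M/M_\alpha$ stays in the class, using closure under direct limits), and the rest of your argument---Eklof's lemma for one inclusion, splitting of the special precover plus closure under direct summands for the other, and the Enochs-type theorem that a complete cotorsion pair with direct-limit-closed left class is perfect---goes through. In short: right skeleton, but the filtration step is wrong as stated, and the fix is to deploy the purity propositions you listed but never actually used.
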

\begin{proof} Obviously, $R$  is  $\phi$-$w$-\P-flat, and $\phi$-$w$-$\mathscr{F}_{P}$ is closed under extension and direct summand. It follows by Lemma \ref{c-t-pf-2}  and \cite[Theorem  3.4]{HJ08} that
$(\phi$-$w$-$\mathscr{F}_{P}$,$\phi$-$w$-$\mathscr{F}_{P}^{\perp_1})$ is a perfect cotorsion pair.  So,   any $R$-module has a $\phi$-$w$-\P-flat cover.
\end{proof}

\section{$\phi$-rings determined by $\phi$-$w$-\P-flat modules}

Let $R$ be a ring. Denote by $\Nil(R)$ the nil-radical of  $R$, and $\Z(R)$ the set of all zero-divisors of  $R$. If $\Nil(R)$ is a prime ideal,  then $R$ is called an
$\NP$-ring.   If, for  any  $x\not\in P$, we have $P\subsetneq (x)$,  then the prime ideal $P$ is called a divisible prime ideal (see  Badawi \cite{A97}).  If $P$ is a divisible prime ideal,  then  for  any  ideal  $I$ of $R$, we have $I\subseteq P$, or $P\subseteq I$.
Write
\begin{center}
$\mathcal {H}=\{R|R$  is a ring and $\Nil(R)$  is  a  divisible  prime  ideal of $R\}$.
\end{center}
If  $R\in \mathcal {H}$,  then  $R$  is called a $\phi$-ring. Trivially,  any  domain  is a $\phi$-ring.  Furthermore, if $\Z(R)=\Nil(R)$,  then a $\phi$-ring $R$ is called a strong $\phi$-ring. We will investigate $\phi$-$w$-\P-flat modules over $\phi$-rings, and generalize some results of $w$-\P-flat modules over domains to $\phi$-rings.

Recall from \cite{ZWT13}, a $\phi$-ring $R$ is called a $\phi$-von Neumann  regular ring,  provided  any $R$-module  is $\phi$-flat.  It follows by
\cite[Theorem 3.1]{ZZ21} that a $\phi$-ring $R$ is $\phi$-von Neumann  regular   if and only if  any $R$-module  is $\phi$-$w$-flat,  if and only if  $a\in (a^2)_w$ for any  non-nilpotent element $a\in R$,  if and only if $R/\Nil(R)$ is a domain with Krull dimension $0$, that is, $R/\Nil(R)$ is a field. The following gives a new characterization of $\phi$-von Neumann  regular  rings.
\begin{theorem}\label{phi-vn}A $\phi$-ring $R$ is $\phi$-von Neumann  regular   if and only if  any $R$-module  is $\phi$-$w$-\P-flat.
\end{theorem}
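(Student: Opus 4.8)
The plan is to reduce both implications to the criterion recorded after \cite[Theorem 3.1]{ZZ21}, namely that $R$ is $\phi$-von Neumann regular if and only if $a\in(a^2)_w$ for every non-nilpotent element $a\in R$, and to pass between this condition and the $\Tor$-formulation of Definition \ref{naga-lucas} through the characterization in Theorem \ref{c-t-pf-l}.

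For the forward implication I would start from $a\in(a^2)_w$, which furnishes $J\in\GV(R)$ with $Ja\subseteq a^2R$. Given any module $M$ and any $m\in M$ with $am=0$, I would write each product $ja$ (for $j\in J$) as $a^2r_j$, so that $j-ar_j\in(0:_Ra)$ and hence $jm=(j-ar_j)m+r_j(am)=(j-ar_j)m\in(0:_Ra)M$. This yields $Jm\subseteq(0:_Ra)M$, and Theorem \ref{c-t-pf-l}(3) then declares $M$ to be $\phi$-$w$-\P-flat; since $M$ was arbitrary, every module is $\phi$-$w$-\P-flat. Alternatively one may invoke \cite[Theorem 3.1]{ZZ21} to make every module $\phi$-$w$-flat and simply observe that $\phi$-$w$-flatness restricts to the principal nonnil ideals $Ra$.

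For the converse I would feed a single well-chosen module into the hypothesis. Fixing a non-nilpotent $a$, I take $M=R/a^2R$ and note that $a\cdot(a+a^2R)=0$; Theorem \ref{c-t-pf-l}(3) then provides $J\in\GV(R)$ with $J(a+a^2R)\subseteq(0:_Ra)M$, which unwinds to $Ja\subseteq(0:_Ra)+a^2R$. The target $a\in(a^2)_w$ amounts to improving this to $Ja\subseteq a^2R$.

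The main obstacle is exactly eliminating the annihilator term $(0:_Ra)$, and this is where the $\phi$-structure is essential. Since $\Nil(R)$ is prime and $a\notin\Nil(R)$, one has $(0:_Ra)\subseteq\Nil(R)$; since $\Nil(R)$ is a divisible prime ideal and $a\notin\Nil(R)$, one has $\Nil(R)\subseteq aR$. Writing $ja=t+a^2c$ with $t\in(0:_Ra)$, I would use $(0:_Ra)\subseteq\Nil(R)\subseteq aR$ to put $t=ab$; then $at=0$ forces $a^2b=0$, and primeness of $\Nil(R)$ together with $a^2\notin\Nil(R)$ forces $b\in\Nil(R)\subseteq aR$, say $b=ab'$. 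Hence $t=a^2b'$ and $ja=a^2(b'+c)\in a^2R$, giving $Ja\subseteq a^2R$ and therefore $a\in(a^2)_w$. Applying \cite[Theorem 3.1]{ZZ21} then closes the argument. I expect the only delicate point to be this cancellation; it is the divisibility of $\Nil(R)$, which lets me divide the nilpotent obstruction by $a$ twice, that makes the term $(0:_Ra)$ collapse into $a^2R$.
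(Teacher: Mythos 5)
Your proof is correct, but it takes a visibly different route from the paper's, above all in the converse direction. For the forward implication the paper is immediate: $\phi$-von Neumann regular means by definition that every module is $\phi$-flat, and $\phi$-flat modules are trivially $\phi$-$w$-\P-flat (a zero $\Tor$ is $\GV$-torsion); your element-wise computation from $a\in(a^2)_w$ proves the same thing with more effort, and your parenthetical alternative is essentially the paper's argument. The genuine divergence is in the converse. The paper tests the hypothesis on the module $R/Ra$ and uses the isomorphism $\Tor_1^R(R/Ra,R/Ra)\cong Ra/Ra^2$; $\GV$-torsionness of this quotient yields $(Ra)_w=(Ra^2)_w$ at once, hence $a\in(a^2)_w$, and the $\phi$-structure enters only through the cited criterion of \cite[Theorem 3.1]{ZZ21}. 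You instead test on $R/a^2R$, where Hattori's formula \cite[Proposition 1]{H60} leaves the obstruction term $(0:_Ra)$ in $Ja\subseteq(0:_Ra)+a^2R$, and you must then eliminate it by hand via $(0:_Ra)\subseteq\Nil(R)\subseteq aR$ (primeness plus dividedness of $\Nil(R)$) and a two-step cancellation $t=ab=a^2b'$. Both arguments are sound and both ultimately rest on \cite[Theorem 3.1]{ZZ21}; what the paper's choice of test module buys is that the annihilator term never appears, while what your route buys is transparency about where the $\phi$-hypothesis is used beyond the cited criterion --- a hypothesis that, as the paper's remark on non-field von Neumann regular rings shows, cannot be dropped.
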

\begin{proof}Obviously, if a $\phi$-ring $R$ is a $\phi$-von Neumann  regular  ring,  then  any $R$-module  is $\phi$-flat, and so  is $\phi$-$w$-\P-flat. On the other hand,  let $a$ be a non-nilpotent element in $R$.  Then $R/Ra$ is $\phi$-$w$-\P-flat. So, $\Tor_1^R(R/Ra,R/Ra)$  is $\GV$-torsion, that is, $Ra/Ra^2$  is $\GV$-torsion.  So  $a\in Ra\subseteq (Ra)_w=(Ra^2)_w$.  It follows by
\cite[Theorem  3.1]{ZZ21} that $R$ is a $\phi$-von Neumann  regular ring.
\end{proof}

It worth to notice that Theorem \ref{phi-vn} does not hold for all rings. For example, let $R$ be a non-field von Neumann regular ring.  Then $R$  is not a $\phi$-ring.  So it is also not  $\phi$-von Neumann  regular. However,  any $R$-module  is flat, and so  is $\phi$-$w$-\P-flat.

It follows by \cite[Theorem  1.9]{ZZ21} that a $\phi$-ring $R$ is a domain  if and only if  any $\phi$-flat module is flat,  if and only if  any $\phi$-$w$-flat is $w$-flat. Similar, we have the following result.
\begin{theorem}\label{domain}A $\phi$-ring $R$ is a domain  if and only if  any $\phi$-$w$-\P-flat $R$-module  is $w$-\P-flat.
\end{theorem}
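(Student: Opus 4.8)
The plan is to prove the two implications separately, the forward one being immediate and the reverse one carrying all the content. For the forward direction, suppose $R$ is a domain. Then $\Nil(R)=0$, so the only nilpotent element of $R$ is $0$. If $M$ is $\phi$-$w$-\P-flat and $r\in R$ is arbitrary, then either $r\neq 0$, hence non-nilpotent, so that $\Tor_1^R(R/Rr,M)$ is $\GV$-torsion by hypothesis, or $r=0$, in which case $\Tor_1^R(R/R\cdot 0,M)=\Tor_1^R(R,M)=0$ is trivially $\GV$-torsion. Thus $\Tor_1^R(R/Rr,M)$ is $\GV$-torsion for every $r\in R$, i.e. $M$ is $w$-\P-flat, so over a domain the two classes coincide. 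The reverse direction I would handle by contraposition: assuming $R$ is a $\phi$-ring that is not a domain, I will exhibit one module that is $\phi$-$w$-\P-flat but not $w$-\P-flat. Since $R\in\mathcal H$ the ideal $\Nil(R)$ is prime, and $R$ is a domain exactly when $\Nil(R)=0$; hence the hypothesis gives $\Nil(R)\neq 0$. Writing $\p:=\Nil(R)$, the candidate module is $M:=R/\p$.

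First I would verify that $R/\p$ is $\phi$-$w$-\P-flat. Let $b\in R$ be non-nilpotent, so $b\notin\p$. By the isomorphism $\Tor_1^R(R/Rb,R/\p)\cong\{m\in R/\p\mid bm=0\}/(0:_Rb)(R/\p)$ of \cite[Proposition 1]{H60}, it suffices that the numerator vanish: if $\overline{x}\in R/\p$ satisfies $b\overline{x}=0$ then $bx\in\p$, and since $\p$ is prime with $b\notin\p$ we get $x\in\p$, i.e. $\overline{x}=0$. Hence $\Tor_1^R(R/Rb,R/\p)=0$ is $\GV$-torsion for every non-nilpotent $b$, so $R/\p$ is $\phi$-$w$-\P-flat.

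The heart of the argument is that $R/\p$ is \emph{not} $w$-\P-flat. Fix any $0\neq b\in\p$; this $b$ is nilpotent and kills $R/\p$, so the same formula gives $\Tor_1^R(R/Rb,R/\p)\cong R/\big((0:_Rb)+\p\big)$, and it remains to show this quotient is not $\GV$-torsion. The key observation is that $Rb\cong R/(0:_Rb)$ is not $\GV$-torsion: if it were, its generator would be annihilated by some $J\in\GV(R)$, i.e. $Jb=0$; but every $\GV$-ideal is faithful, since the defining isomorphism $R\cong\Hom_R(J,R)$ is in particular injective and its kernel is $\ann_R(J)$, so $\ann_R(J)=0$ and $Jb=0$ forces $b=0$, a contradiction. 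Recalling that $R/I$ is $\GV$-torsion iff $I$ contains a $\GV$-ideal, this says $(0:_Rb)$ contains no $\GV$-ideal, hence $(0:_Rb)\subseteq\m_0$ for some $\m_0\in \wmax(R)$. Since $\p=\Nil(R)$ is contained in every prime ideal, $\p\subseteq\m_0$, whence $(0:_Rb)+\p\subseteq\m_0$; an ideal contained in a $w$-maximal ideal contains no $\GV$-ideal, so $R/\big((0:_Rb)+\p\big)$ is not $\GV$-torsion. Therefore $\Tor_1^R(R/Rb,R/\p)$ fails to be $\GV$-torsion, so $R/\p$ is not $w$-\P-flat, completing the contrapositive.

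The step I expect to be most delicate is exactly this non-$\GV$-torsion claim. The temptation is to compute $(0:_Rb)$ explicitly, but it can be large and need not lie inside $\p$ already in non-strong $\phi$-rings, for instance in an idealization such as $\mathbb Z\ltimes(\mathbb Q/\mathbb Z)$. The clean way around this is to bypass any computation and combine two robust facts: the faithfulness of $\GV$-ideals, which shows $Rb$ is not $\GV$-torsion purely from $b\neq 0$, and the elementary observation that $\Nil(R)$ sits inside every $w$-maximal ideal, so adjoining $\p$ to $(0:_Rb)$ cannot turn it into a $w$-dense ideal. Together these make the argument uniform across all $\phi$-rings and independent of whether the $w$-operation is nontrivial.
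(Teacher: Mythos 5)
Your argument is correct, and it rests on the same test module as the paper's proof, namely $R/\Nil(R)$; the difference lies in how the decisive non-vanishing is established. The paper argues directly rather than by contraposition: it cites \cite[Proposition 1.7]{ZZ21} for $\phi$-flatness of $R/\Nil(R)$, computes $\Tor_1^R(R/(a),R/\Nil(R))\cong (a)/a\Nil(R)$ for nilpotent $a$ (this is the same group as your $R/\bigl((0:_Ra)+\Nil(R)\bigr)$, just presented via the formula $(I\cap K)/IK$ instead of Hattori's), and then deduces $a=0$ from its $\GV$-torsionness using \cite[Lemma 1.6]{ZZ21} --- the $\phi$-ring-specific identity $J\Nil(R)=\Nil(R)$ for nonnil $J$ --- together with Nakayama's lemma applied to the finitely generated ideal $aJ$ and the $\GV$-torsion-freeness of $R$. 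You replace that machinery with pure $w$-theory: faithfulness of $\GV$-ideals shows $(0:_Rb)$ contains no $\GV$-ideal when $b\ne 0$, so it lies inside some maximal $w$-ideal $\m_0$; since $\m_0$ is prime it contains $\Nil(R)$, hence $(0:_Rb)+\Nil(R)\subseteq \m_0$ contains no $\GV$-ideal and the Tor group is not $\GV$-torsion. Your route is more self-contained (you also verify $\phi$-$w$-\P-flatness of $R/\Nil(R)$ directly from primeness of $\Nil(R)$, avoiding both citations to \cite{ZZ21}) and slightly more general: your key step shows that over an arbitrary ring $R/\Nil(R)$ can be $w$-\P-flat only if $R$ is reduced, the $\phi$-hypothesis entering only to make $R/\Nil(R)$ a $\phi$-$w$-\P-flat test module and to pass from reduced to domain. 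The price is an appeal to the standard facts that an ideal containing no $\GV$-ideal is contained in a maximal $w$-ideal and that maximal $w$-ideals are prime; both are in \cite{fk16} and the latter is asserted in the paper's preliminaries, so the appeal is legitimate.
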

\begin{proof} Obviously, if a $\phi$-ring $R$ is  domain,  then  any $\phi$-$w$-\P-flat $R$-module  is $w$-\P-flat. On the other hand, it follows by
\cite[ Proposition  1.7]{ZZ21}, $R/\Nil(R)$ is $\phi$-flat $R$-module,  so is a $\phi$-$w$-\P-flat $R$-module. By assumption, $R/\Nil(R)$  is $w$-\P-flat. Let $a$ be a nilpotent element in $R$.  Then $\Tor_1^R(R/(a),R/\Nil(R))\cong(a)\cap\Nil(R)/a\Nil(R)=(a)/a\Nil(R)$  is $\GV$-torsion. Hence, there  exists  a $\GV$-ideal  $J$ such that $aJ\subseteq a\Nil(R)$.  Since $J$  is  nonnil  ideal, it follows  by \cite[Lemma 1.6]{ZZ21} that $\Nil(R)=J\Nil(R)$. So $aJ\subseteq a\Nil(R)=aJ\Nil(R)\subseteq aJ$, that is, $aJ=aJ\Nil(R)$.  Because  $aJ$  is  finitely generated, it follows by Nakayama Lemma that $aJ=0$.  Since  $J\in \GV(R)$, $a\in R$  is  $\GV$-torsion-free, we have $a=0$. Consequently, $\Nil(R)=0$. So $R$ is a reduced $\phi$-ring, that is, a  domain.
\end{proof}

It is worth to notice that  Theorem \ref{domain} is not true for commutative rings. For example, let $R$ be a non-field von Neumann regular  ring. Then  any $R$-module  is $w$-\P-flat, and so  is $\phi$-$w$-\P-flat. Hence any $\phi$-$w$-\P-flat $R$-module  is $w$-\P-flat. However, $R$  is not a  domain.

\begin{corollary} A $\phi$-ring $R$ is a  $\DW$-domain  if and only if  any $\phi$-$w$-\P-flat $R$-module  is \P-flat.
\end{corollary}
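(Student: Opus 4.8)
The plan is to obtain this corollary almost for free from Theorem \ref{domain}, by factoring the $\DW$-domain hypothesis into its two ingredients --- being a domain and being a $\DW$-ring --- and handling each with a separate observation.

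For the forward direction I would assume $R$ is a $\DW$-domain. Being a domain, Theorem \ref{domain} immediately upgrades every $\phi$-$w$-\P-flat module to a $w$-\P-flat module. Being a $\DW$-ring means the only $\GV$-ideal is $R$ itself, so a module is $\GV$-torsion exactly when it is zero; hence the defining clause ``$\Tor_1^R(R/Ra,M)$ is $\GV$-torsion'' is equivalent to ``$\Tor_1^R(R/Ra,M)=0$'', i.e. $w$-\P-flatness and \P-flatness coincide. Composing the two facts gives that every $\phi$-$w$-\P-flat module is \P-flat.

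For the converse I would assume every $\phi$-$w$-\P-flat module is \P-flat. Since a \P-flat module is trivially $w$-\P-flat and hence $\phi$-$w$-\P-flat, the hypothesis yields that every $\phi$-$w$-\P-flat module is $w$-\P-flat, so Theorem \ref{domain} forces $R$ to be a domain. The remaining task is to deduce that $R$ is a $\DW$-ring, and here the key idea is to feed $\GV$-torsion modules into the hypothesis: each such module is $\phi$-$w$-\P-flat, hence \P-flat by assumption. Over a domain a \P-flat module is torsion-free, whereas a $\GV$-torsion module is torsion (every element is annihilated by a nonzero element of a $\GV$-ideal), and a module that is simultaneously torsion and torsion-free is zero. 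Thus every $\GV$-torsion module vanishes, which is precisely the $\DW$ condition.

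I expect the only substantive point to be this last step of the converse --- recognizing that $\GV$-torsion modules are the correct test class and then invoking the torsion/torsion-free dichotomy that a domain provides; everything else is formal manipulation of the three flatness notions. As an alternative to the dichotomy argument, once $R$ is known to be a domain I could instead quote the result of Xia and Qiao \cite{X22} characterizing $\DW$-domains as exactly those domains over which every $w$-\P-flat module is \P-flat, applied to the implications $w$-\P-flat $\Rightarrow$ $\phi$-$w$-\P-flat $\Rightarrow$ \P-flat.
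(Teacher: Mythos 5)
Your proof is correct, and its skeleton matches the paper's: the forward direction is the easy composition of the ``domain'' and ``$\DW$'' ingredients, and the converse first feeds the implication \P-flat $\Rightarrow$ $w$-\P-flat into the hypothesis so that Theorem \ref{domain} yields that $R$ is a domain. Where you genuinely diverge is the remaining step of the converse, establishing the $\DW$ property. The paper observes that every $w$-\P-flat module is \P-flat (via $w$-\P-flat $\Rightarrow$ $\phi$-$w$-\P-flat $\Rightarrow$ \P-flat) and then simply cites \cite[Theorem 4]{X22} --- which is exactly the route you list as your ``alternative''. Your primary argument instead re-proves the needed direction of that cited theorem from scratch: $\GV$-torsion modules are $\phi$-$w$-\P-flat, hence \P-flat by hypothesis, hence torsion-free over the domain $R$; since they are also torsion (a $\GV$-ideal of a domain is nonzero), they vanish, and the vanishing of all $\GV$-torsion modules is equivalent to $\GV(R)=\{R\}$, i.e.\ to $R$ being a $\DW$-ring. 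All the ingredients you use are legitimate: the paper itself records that $\GV$-torsion modules are trivially $\phi$-$w$-\P-flat, \P-flat over a domain is exactly torsion-free, and the characterization of $\DW$-rings by $\GV(R)=\{R\}$ is standard (see \cite{fk16}). What your route buys is self-containedness --- the corollary no longer depends on \cite[Theorem 4]{X22} --- at the modest cost of invoking that standard $\DW$ characterization; the paper's route is shorter, while yours makes visible exactly why the statement holds.
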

\begin{proof} Obviously,  if $\phi$-ring $R$ is a $\DW$-domain,  then  any $\phi$-$w$-\P-flat $R$-module  is \P-flat. On the other hand, it follows by  Theorem
\ref{domain} that $R$ is a domain. In this situation,  any $w$-\P-flat $R$-module  is \P-flat.   It follows by \cite[ Theorem  4]{X22} that $R$ is a $\DW$-domain.
\end{proof}

\begin{lemma}\label{P1X22}\cite[Proposition  1]{X22}
 Every cyclic $w$-\P-flat module is $w$-flat.
\end{lemma}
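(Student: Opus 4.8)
The statement to prove is Lemma~\ref{P1X22}: every cyclic $w$-\P-flat module is $w$-flat.

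The plan is to reduce the assertion to a statement about cyclic modules that can be checked locally. A cyclic module has the form $R/I$ for some ideal $I$, so I would first reformulate the problem: a module $M$ is $w$-flat precisely when $\Tor_1^R(R/Ra, M)$ is $\GV$-torsion for every element $a$, i.e.\ the $w$-\P-flat condition, combined with the analogous vanishing against \emph{arbitrary} finitely generated ideals. The gap between ``$w$-\P-flat'' and ``$w$-flat'' is exactly that $w$-\P-flatness only tests against principal ideals $Ra$, whereas $w$-flatness tests against all finitely generated ideals. So the content of the lemma is that, for a cyclic module, testing against principal ideals already suffices.

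First I would write $M = R/I$ and compute $\Tor_1^R(R/Ra, R/I)$. Using the standard resolution $0 \to Ra \to R \to R/Ra \to 0$ (noting $Ra \cong R/(0:_R a)$ when convenient), one gets $\Tor_1^R(R/Ra, R/I) \cong (I \cap Ra)/aI$ or a similar explicit subquotient. The hypothesis that $R/I$ is $w$-\P-flat says this is $\GV$-torsion for every $a$. I would then aim to show $\Tor_1^R(N, R/I)$ is $\GV$-torsion for every finitely generated $N$, which by the Tor symmetry and the structure of $\Tor_1$ against a cyclic module reduces to understanding $\Tor_1^R(R/J, R/I)$ for finitely generated $J$. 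The key reduction is that $w$-flatness can be checked on the localizations $M_\m$ at maximal $w$-ideals $\m \in w\mbox{-}\Max(R)$: a module is $w$-flat iff $M_\m$ is a flat $R_\m$-module for every such $\m$. So I would localize.

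The main step, and the expected main obstacle, is the local analysis. After localizing at $\m \in w\mbox{-}\Max(R)$, the $\GV$-torsion submodules die (since $J_\m = R_\m$ for $J \in \GV(R)$), so the $w$-\P-flatness of $R/I$ becomes honest \P-flatness of $(R/I)_\m = R_\m/I_\m$ over the local ring $R_\m$. The heart of the matter is then the purely local claim: a cyclic \P-flat module over a (in this setting, $w$-local-type) ring $R_\m$ is flat. I expect this to hinge on the fact that over $R_\m$ the relevant ideals become comparable or principal enough that $\Tor_1$ against $R_\m/I_\m$ vanishes against all finitely generated ideals once it vanishes against principal ones; concretely, one shows $R_\m/I_\m$ is flat by verifying the equational/relational criterion for flatness, which for a cyclic module reduces to a single-element condition already guaranteed by \P-flatness. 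Assembling these local flatness statements back up gives $w$-flatness of $R/I$ globally. I would lean on Theorem~\ref{c-t-pf-l} and the local characterization of $w$-flat modules to glue, and the delicate point will be ensuring the local rings $R_\m$ have the structure needed to upgrade cyclic \P-flat to flat.
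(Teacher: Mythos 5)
A preliminary remark: the paper contains no proof of this lemma at all --- it is imported verbatim, as the citation indicates, from \cite[Proposition 1]{X22} --- so your proposal has to stand on its own. Your overall route is the natural one and it does work: localize at each $\m\in w$-$\Max(R)$, use that $\GV$-torsion modules vanish under such localizations and that Tor commutes with localization, prove flatness of $(R/I)_\m$ over $R_\m$, and glue back via the standard characterization that a module is $w$-flat if and only if its localization at every maximal $w$-ideal is flat. One small point you elide: every principal ideal of $R_\m$ has the form $(Ra)_\m$ for some $a\in R$ (denominators are units), so the localized hypothesis really does say that $R_\m/I_\m$ is a \P-flat $R_\m$-module, tested against \emph{all} of its principal ideals.

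The genuine gap is at the step you yourself call ``the heart of the matter'': you never prove that a cyclic \P-flat module is flat, you only ``expect'' it, and your hedging (``the relevant ideals become comparable or principal enough'', ``ensuring the local rings $R_\m$ have the structure needed'') points in the wrong direction --- no special structure of $R_\m$ is used; the statement holds over an arbitrary commutative ring. Here is the missing argument. Let $S$ be any commutative ring and suppose $S/I$ is \P-flat. For $a\in I$ we have $a\cdot\bar{1}=0$ in $S/I$, so \P-flatness gives $\bar{1}\in (0:_Sa)(S/I)=((0:_Sa)+I)/I$; write $1=x+b$ with $xa=0$ and $b\in I$, and multiply by $a$ to get $a=ab\in aI$. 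Now let $J=(a_1,\dots,a_n)$ be any finitely generated ideal of $S$ and let $c\in J\cap I$. Since $c\in I$, choose $b\in I$ with $c=cb$, and write $c=\sum_{i=1}^n r_ia_i$; then $c=cb=\sum_{i=1}^n r_i(a_ib)\in JI$. Hence $\Tor_1^S(S/J,S/I)\cong (J\cap I)/JI=0$ for every finitely generated ideal $J$, which is precisely flatness of $S/I$. Applying this with $S=R_\m$ closes the gap, and your glueing step then finishes the proof.
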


If  any  non-nilpotent element in a $\phi$-ring $R$ is non-zero-divisor,  then  $R$ is called a strong $\phi$-ring. The following result gives a characterization of  strong $\phi$-rings.
\begin{theorem}\label{Sphi} Suppose $R$ is $\phi$-ring.  Then  the following statements are equivalent:
\begin{enumerate}[(1)]
    \item $R$ is a strong $\phi$-ring;
    \item  any ideal of $R$  is $\phi$-$w$-\P-flat;
    \item  any  principal ideal of $R$ is $\phi$-$w$-\P-flat;
 \item  any  nonnil  principal ideal of $R$ is $w$-flat;
 \item any submodule of $\phi$-$w$-\P-flat $R$-module is $\phi$-$w$-\P-flat;
 \item  for  any  element $r\in R$ and any non-nilpotent element $a\in R$ with $ra=0$, there exist $b\in (0:_RRa)$ and $J\in\GV(R)$ such that $Jr\subseteq Rbr$.
\end{enumerate}
\end{theorem}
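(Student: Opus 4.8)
The plan is to treat statement (1) as a hub: I would prove the short cycle $(1)\Rightarrow(2)\Rightarrow(3)\Rightarrow(1)$ together with the biconditionals $(1)\Leftrightarrow(5)$, $(1)\Leftrightarrow(6)$ and $(1)\Leftrightarrow(4)$. Two standing facts about a $\phi$-ring will be used repeatedly. First, since $\Nil(R)$ is prime, a relation $ra=0$ with $a$ non-nilpotent forces $r\in\Nil(R)$, so $(0:_R a)\subseteq\Nil(R)$ for every non-nilpotent $a$. Second, $R$ is $\GV$-torsion-free and $\GV(R)$ is closed under products (see \cite{fk16}), so that $J^n x=0$ with $J\in\GV(R)$ already gives $x=0$. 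These let me convert weak $\GV$-torsion data into genuine annihilator vanishing.

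The easy directions I would dispatch first. For $(1)\Rightarrow(2)$: if $a$ is non-nilpotent it is a non-zero-divisor, so for an ideal $I$ one has $\{m\in I\mid am=0\}\subseteq(0:_R a)=0$, whence $\Tor_1^R(R/Ra,I)=0$ by the isomorphism in Theorem \ref{c-t-pf-l}, and $I$ is $\phi$-$w$-\P-flat. The implication $(2)\Rightarrow(3)$ is trivial. For $(1)\Rightarrow(5)$: if $N$ is $\phi$-$w$-\P-flat and $M\subseteq N$, then for non-nilpotent $a$ we have $(0:_R a)=0$, so $\Tor_1^R(R/Ra,M)\cong\{m\in M\mid am=0\}$ is a submodule of the $\GV$-torsion module $\Tor_1^R(R/Ra,N)$, hence $\GV$-torsion; and $(5)\Rightarrow(2)$ holds because $R$ is $\phi$-$w$-\P-flat and every ideal is a submodule of $R$. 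Finally $(1)\Rightarrow(6)$ and $(1)\Rightarrow(4)$ are immediate: in a strong $\phi$-ring $ra=0$ with $a$ non-nilpotent gives $r=0$ (take $b=0$), and $Ra\cong R$ via $s\mapsto sa$ is free, hence $w$-flat.

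The heart of the argument, and the only real difficulty, is recovering (1) from the flatness hypotheses, because $\phi$-$w$-\P-flatness gives no control over nilpotent test elements. For $(3)\Rightarrow(1)$ I would fix a non-nilpotent $a$ and $r\in(0:_R a)$; applying (3) to the principal ideal $Rr$ and Theorem \ref{c-t-pf-l}(3) to the relation $ar=0$ produces $J\in\GV(R)$ with $Jr\subseteq(0:_R a)Rr=(0:_R a)r$. Writing $J=(j_1,\dots,j_n)$ and $j_k r=t_k r$ with $t_k\in(0:_R a)\subseteq\Nil(R)$, the ideal $\mathfrak b=(t_1,\dots,t_n)$ is a finitely generated nil ideal, hence nilpotent, and $Jr\subseteq\mathfrak b r$. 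Iterating the containment yields $J^n r\subseteq\mathfrak b^n r$, so $J^N r\subseteq\mathfrak b^N r=0$ where $N$ is the nilpotency index; since $J^N\in\GV(R)$ and $R$ is $\GV$-torsion-free, $r=0$. Thus $(0:_R a)=0$ for every non-nilpotent $a$, i.e. $R$ is a strong $\phi$-ring. The implication $(6)\Rightarrow(1)$ is the same computation with the single nilpotent element $b$ replacing $\mathfrak b$: from $Jr\subseteq Rbr$ one gets $J^N r\subseteq Rb^N r=0$.

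It remains to prove $(4)\Rightarrow(1)$, which I would do by localizing at $w$-maximal ideals rather than by the nilpotent-ideal trick, since (4) only controls nonnil principal ideals and the ideal $Rr$ appearing in the previous step may be nil. Fix a non-nilpotent $a$; then $Ra\cong R/(0:_R a)$ is $w$-flat, so for each $\m\in w\mbox{-}\Max(R)$ the localization $(Ra)_\m\cong R_\m/(0:_{R_\m}a)$ is a flat cyclic $R_\m$-module. Because $(0:_R a)\subseteq\Nil(R)\subseteq\m$, the element $a/1$ is nonzero in $R_\m$, so this flat cyclic module is nonzero, and the standard characterization of flat cyclic modules forces $(0:_{R_\m}a)=0$ over the local ring $R_\m$. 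Hence $(0:_R a)_\m=0$ for all such $\m$, so $(0:_R a)$ is $\GV$-torsion; being a submodule of the $\GV$-torsion-free module $R$, it is $0$, and $a$ is a non-zero-divisor. The main obstacle throughout is precisely this passage from "$\GV$-torsion or $w$-flat modulo nilpotents" to honest annihilator vanishing; once the $\phi$-ring facts $(0:_R a)\subseteq\Nil(R)$ and the $\GV$-torsion-freeness of $R$ are in place, the nilpotency of finitely generated nil ideals (respectively the locality of flatness) closes the gap.
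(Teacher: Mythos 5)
Your proposal is correct, but it is organized quite differently from the paper's proof. The paper establishes the single chain $(1)\Rightarrow(2)\Rightarrow(3)\Rightarrow(4)\Rightarrow(5)\Rightarrow(2)$ together with $(3)\Rightarrow(6)\Rightarrow(1)$, so conditions (4), (5), (6) are derived from the flatness hypotheses rather than from (1), and its two nontrivial moves are homological: $(3)\Rightarrow(4)$ uses the dimension shift $\Tor_1^R(R/Ra,I)\cong\Tor_2^R(R/Ra,R/I)\cong\Tor_1^R(R/I,Ra)$ together with Lemma \ref{P1X22} (cyclic $w$-\P-flat implies $w$-flat), and $(4)\Rightarrow(5)$ is another $\Tor$ dimension shift exploiting the $w$-flatness of $Ra$. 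Your hub structure centered at (1) replaces both of these: $(1)\Rightarrow(4)$ and $(1)\Rightarrow(5)$ become trivial once non-nilpotent elements are non-zero-divisors, at the cost of the extra converse $(4)\Rightarrow(1)$, which you prove by localization, using that $w$-flatness and $\GV$-torsion are both detected at maximal $w$-ideals and that a nonzero cyclic flat module over a local ring is faithful; this is machinery the paper never invokes, though it is standard and available in \cite{fk16,WK14}. Your $(3)\Rightarrow(1)$, via the finitely generated nil (hence nilpotent) ideal $\mathfrak{b}=(t_1,\dots,t_n)$ and the containment $J^N r\subseteq \mathfrak{b}^N r=0$, is a sound generalization of the paper's $(6)\Rightarrow(1)$ computation with a single nilpotent $b$. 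Your routing even has a side benefit: the paper's $(3)\Rightarrow(6)$ asserts that, since $J$ is finitely generated, there is a \emph{single} element $b\in(0:_RRa)$ with $Jr\subseteq Rbr$, which does not obviously follow (one only obtains finitely many $t_k$ with $j_kr=t_kr$); your argument never needs that step, since you get (6) from (1) trivially with $b=0$ and handle (3) with the ideal $\mathfrak{b}$ instead. The trade-off is length and dependence: you prove nine implications instead of the paper's seven, and your $(4)\Rightarrow(1)$ leans on the local theory of the $w$-operation, whereas the paper's cycle is more economical and stays within Hattori-style $\Tor$ computations.
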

\begin{proof}
  $(2)\Rightarrow (3)$ and $(5)\Rightarrow (2)$: obvious.

 $(1)\Rightarrow (2)$: Suppose $R$ is a strong $\phi$-ring. Let $I$ be an ideal of $R$.   For  any non-nilpotent element  $a$ with $(a,i)\in R\times I$ satisfying  $ai=0$, we have $i=0$.  So $0=Ji\subseteq (0:_Ra)I$ for  any  ideal  $J\in \GV(R)$.  It follows by  Theorem \ref{c-t-pf-l} that $I$ is $\phi$-$w$-\P-flat.

 $(3)\Rightarrow (4)$:   Suppose $Ra$ is a nonnil  principal  ideal of $R$.  Then $a$ is  non-nilpotent in $R$.  Suppose $I$ is a principal  ideal of $R$. Then we have a natural isomorphism
 $$\Tor_1^R(R/Ra,I)\cong \Tor_2^R(R/Ra,R/I)\cong \Tor_1^R(R/I,Ra).$$  By assumption, $I$ is $\phi$-$w$-\P-flat. So $\Tor_1^R(R/I,Ra)$  is $\GV$-torsion.  Hence  $Ra$ is cyclic and $w$-\P-flat. It follows  by Lemma \ref{P1X22}  that $Ra$ is $w$-flat.

$(4)\Rightarrow (5)$:  Suppose $M$ is a $\phi$-$w$-\P-flat module and $N$ is a submodule of $M$.  Suppose $a$ is a non-nilpotent element in $R$. Then $Ra$ is $w$-flat.
Consider the long exact sequence $$\Tor_2^R(R/Ra,M/N)\rightarrow \Tor_1^R(R/Ra,N)\rightarrow \Tor_1^R(R/Ra,M).$$  Since $M$ is $\phi$-$w$-\P-flat, then $\Tor_1^R(R/Ra,M)$ is $\GV$-torsion. To show $N$ is $\phi$-$w$-\P-flat, we only need to prove $\Tor_2^R(R/Ra,M/N)$  is $\GV$-torsion. In fact, consider the exact sequence $$0=\Tor_2^R(R,M/N)\rightarrow \Tor_2^R(R/Ra,M/N)\rightarrow \Tor_1^R(Ra,M/N)\rightarrow \Tor_1^R(R,M/N)=0.$$  Since $Ra$ is $w$-flat,  $\Tor_1^R(Ra,M/N)$ is $\GV$-torsion.  So  $\Tor_2^R(R/Ra,M/N)$ is $\GV$-torsion.

$(3)\Rightarrow (6)$:  Suppose $r\in R$ and $a$ is a  non-nilpotent element in $R$.  Then $\Tor_1^R(R/Ra,Rr)$ is $\GV$-torsion. It follows by \cite[ Proposition  1]{H60} that $$\Tor_1^R(R/Ra,Rr)\cong \{xr\in Rr\mid axr = 0\}/(0:_Ra)Rr.$$
 So  if $ra=0$,  then there exists $\GV$-ideal $J$ such that $Jr\subseteq (0:_Ra)Rr$.  Since $J$ is  finitely generated,  there exists $b\in (0:_RRa)$ such that $Jr\subseteq Rbr$.


$(6)\Rightarrow (1)$:  Suppose $a$ is a zero-divisor in $R$.  Then there exists $0\not=r\in R$ such that $ar=0$. By converse, assume $a$  is not nilpotent.  Then there  exist $b\in (0:_RRa)$ and a $\GV$-ideal $J$ such that  $Jr\subseteq Rbr$ by assumption. Since $R$ is a $\phi$-ring and $a$  is not  nilpotent,  then $b$  is  nilpotent. Assume $b^m=0$.  Then $$J^mr=J^{m-1}(Jr)\subseteq J^{m-2}J(Rbr)\subseteq J^{m-3}J(Rb^2r)\subseteq \cdots \subseteq J(Rb^{m-1}r)\subseteq Rb^mr=0.$$
Notice that $J^m$ is a $\GV$-ideal.  So $J^m$ is a finitely generated semi-regular ideal. Hence $r=0$, a contradiction.
\end{proof}

Let $R$ be a  $\phi$-ring.  Denote by $\phi:R\rightarrow R_{\Nil(R)}$ with $\phi(r)=\frac{r}{1}$ for any $r\in R$. Recall from \cite{kf12}, an ideal $I$ of $R$ is called $\phi$-$w$-invertible,  if $(\phi(I)\phi(I)^{-1})_W=\phi(R)$, where $W$ is the $w$-operation of $\phi(R)$.
Recall from \cite[ Definition  3.2]{ZZ21} that  if  any  finitely generated nonnil  ideal of $R$  is $\phi$-$w$-invertible,  then 
$\phi$-ring $R$ is called a $\phi$-$\PvMR$. It follows by \cite[Theorem  3.3]{ZZ21} that a  strong $\phi$-ring $R$ is a $\phi$-$\PvMR$ if and only if  any  submodule of $w$-flat module is $\phi$-$w$-flat,  if and only if   any  submodule of  flat  module  is $\phi$-$w$-flat,  if and only if any  ideal of $R$  is $\phi$-$w$-flat,  if and only if  any  nonnil  ideal of $R$ is $\phi$-$w$-flat,  if and only if any  finite type  nonnil  ideal of $R$  is $\phi$-$w$-flat. Then a natural question is that:
\begin{center}
Can we delete the condition ``$R$ is  strong $\phi$-ring '' is in \cite[Theorem  3.3]{ZZ21}?
\end{center}
The following result give a negative answer to this qustion.

\begin{corollary}\label{pvmr} Let $R$ be a $\phi$-ring. Then  the following statements are equivalent:
\begin{enumerate}[(1)]
    \item $R$ is a $\phi$-$\PvMR$ and a strong $\phi$-ring;
    \item any  submodule of $w$-flat $R$-module  is $\phi$-$w$-flat;
    \item any  submodule of flat $R$-module is $\phi$-$w$-flat;
    \item  any  ideal of $R$ is $\phi$-$w$-flat;
 \item any  nonnil  ideal of $R$ is $\phi$-$w$-flat;
 \item  any  finite type  nonnil  ideal of $R$ is $\phi$-$w$-flat.
\end{enumerate}
\end{corollary}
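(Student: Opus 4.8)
The plan is to treat condition (1) as the hub and to recognize that statements (2)--(6) are exactly the conditions that \cite[Theorem 3.3]{ZZ21} proves equivalent to ``$\phi$-$\PvMR$'' \emph{under the standing hypothesis that $R$ is a strong $\phi$-ring}. Thus the implication $(1)\Rightarrow(2)$--$(6)$ is immediate from that theorem (since (1) packages both $\phi$-$\PvMR$ and strongness), and the whole problem reduces to showing that each of (2)--(6), on its own, already forces $R$ to be a strong $\phi$-ring. Once this is known, the given condition together with the converse direction of \cite[Theorem 3.3]{ZZ21} upgrades $R$ to a $\phi$-$\PvMR$, which is precisely (1).

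First I would dispose of the easy reductions. Since $R$ is flat, hence $w$-flat, every ideal is a submodule of the $w$-flat (resp.\ flat) module $R$, so $(2)\Rightarrow(4)$ and $(3)\Rightarrow(4)$; and $(4)\Rightarrow(5)\Rightarrow(6)$ by merely shrinking the class of ideals under consideration. It therefore suffices to prove $(6)\Rightarrow$ ``$R$ is a strong $\phi$-ring''. For (2), (3), (4) one can in fact shortcut: all ideals are then $\phi$-$w$-flat, hence $\phi$-$w$-\P-flat (restrict the defining $\Tor$-condition to nonnil principal ideals), and Theorem~\ref{Sphi} yields strongness directly. The genuine issue is (5) and (6), where only \emph{nonnil} ideals are assumed flat, so this shortcut is unavailable and Theorem~\ref{Sphi} cannot be invoked.

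For the key step I would argue, by contradiction, that every non-nilpotent element is a non-zero-divisor. Suppose $a$ is non-nilpotent and $ra=0$ with $r\neq 0$. The ideal $I=(a,r)$ is of finite type and nonnil (it contains $a$), so by (6) it is $\phi$-$w$-flat, hence $\phi$-$w$-\P-flat. Feeding $a$ into the isomorphism $\Tor_1^R(R/Ra,I)\cong\{x\in I\mid ax=0\}/(0:_Ra)I$ of \cite[Proposition 1]{H60}, and noting $(0:_Ra)Ra=0$ so that $(0:_Ra)I=(0:_Ra)r$, the $\GV$-torsion of $\Tor_1^R(R/Ra,I)$ (applied to the class of $r$) produces $J\in\GV(R)$ with $Jr\subseteq(0:_Ra)r$. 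For each generator $j$ of $J$ write $jr=sr$ with $s\in(0:_Ra)$; since $sa=0$, $a\notin\Nil(R)$ and $\Nil(R)$ is prime, $s$ is nilpotent, and a trivial induction gives $j^nr=s^nr=0$ for $n\gg0$. A pigeonhole on degree-$N$ monomials in the generators then yields $J^Nr=0$ for $N\gg0$, where $J^N\in\GV(R)$; as $R$ is $\GV$-torsion-free, this forces $r=0$, the desired contradiction. Hence $R$ is a strong $\phi$-ring.

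Finally, given any single one of (2)--(6), the reductions above produce (6), hence $R$ is a strong $\phi$-ring, and then the very condition we started from, read through \cite[Theorem 3.3]{ZZ21}, makes $R$ a $\phi$-$\PvMR$; so (1) holds and the cycle closes. I expect the main obstacle to be exactly the $(6)\Rightarrow$ strong step: the difficulty is that (5) and (6) say nothing about ideals sitting inside $\Nil(R)$, so strongness cannot be read off Theorem~\ref{Sphi}, and one must instead exploit that annihilators of non-nilpotent elements are nilpotent together with the multiplicative, $\GV$-torsion-free behaviour of $\GV$-ideals to annihilate $r$ through the power identity $j^nr=s^nr$.
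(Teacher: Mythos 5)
Your proposal is correct, and its global architecture matches the paper's: treat (1) as the hub, get $(1)\Rightarrow(2)$ from \cite[Theorem 3.3]{ZZ21}, dispose of the implications among (2)--(6) by trivial restriction, and reduce everything to showing that (6) alone forces $R$ to be a strong $\phi$-ring, after which \cite[Theorem 3.3]{ZZ21} upgrades (6) to the $\phi$-$\PvMR$ property. The genuine difference is how strongness is extracted from (6). The paper argues: by (6) every nonnil principal ideal $Ra$ is $\phi$-$w$-flat, hence $w$-flat by Lemma \ref{P1X22}, hence $R$ is strong by Theorem \ref{Sphi}. You instead run a direct contradiction: from $ra=0$ with $a$ non-nilpotent and $r\neq 0$, the finite type nonnil ideal $I=(a,r)$ is $\phi$-$w$-\P-flat, so \cite[Proposition 1]{H60} together with $(0:_Ra)Ra=0$ produces $J\in\GV(R)$ with $Jr\subseteq (0:_Ra)r$; then primeness of $\Nil(R)$ makes the multipliers nilpotent, a pigeonhole on powers of the generators gives $J^Nr=0$ with $J^N\in\GV(R)$, and $\GV$-torsion-freeness of $R$ forces $r=0$. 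This is in substance the machinery of Theorem \ref{Sphi} ($(3)\Rightarrow(6)\Rightarrow(1)$) inlined and transplanted to the ideal $(a,r)$. Your longer route buys something real: the paper's appeal to Lemma \ref{P1X22} is not literally licensed, since that lemma needs $Ra$ to be $w$-\P-flat, i.e., $\Tor_1^R(R/Rr,Ra)$ $\GV$-torsion for \emph{all} $r$ including nilpotent ones, whereas (6) only yields $\phi$-$w$-flatness, which controls Tor against \emph{nonnil} ideals. Repairing that step requires exactly your device of passing to the nonnil ideal $(a,r)$ (for instance, $\Tor_1^R(R/Ra,Rr)$ embeds into the $\GV$-torsion module $\Tor_1^R(R/Ra,I)$ because $(0:_Ra)Rr=(0:_Ra)I$). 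So your observation that (5) and (6) say nothing about ideals inside $\Nil(R)$ --- the reason Theorem \ref{Sphi} cannot be invoked verbatim --- identifies, and correctly patches, precisely the point the paper glosses over; what the paper's route offers in exchange is only brevity.
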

\begin{proof}$(1)\Rightarrow (2)$: Follows by \cite[ Theorem  3.3]{ZZ21}.

$(2)\Rightarrow (3)\Rightarrow (4)\Rightarrow (5)\Rightarrow (6)$: Trivial.

$(6)\Rightarrow (1)$:  Suppose  every finite type  nonnil  ideal of $R$  is $\phi$-$w$-flat, and so  is $\phi$-$w$-\P-flat.   Hence every nonnil principal ideal of $R$  is $\phi$-$w$-flat, and thus $w$-flat  by Lemma \ref{P1X22}. It follows by Theorem \ref{Sphi} that $R$ is a strong $\phi$-ring.  So, it follows by \cite[ Theorem  3.3]{ZZ21} that $R$ is a  $\phi$-$\PvMR$.
\end{proof}

\begin{example}\label{w-phi-prufer}  Suppose $D$ is a non-field $\PvMD$, $K$ is its quotient field. Since $K/D$  is divisible $D$-module,  we have the  trivial extension $R:=D(+)K/D$  is $\phi$-ring, but not  a strong  $\phi$-ring $($ see \cite[Remark 1]{FA05}$).$  Then $\Nil(R)=0(+)K/D$.  So  $R/\Nil(R)\cong D$  is a $\PvMD$.
It follows  by \cite[Theorem  3.3]{ZZ21} that $R$ is a $\phi$-$\PvMR$. However,  by  Corollary \ref{pvmr},  there exists an ideal $I$ of $R$ wihch is not  $\phi$-$w$-flat.
\end{example}

\begin{acknowledgement}\quad\\
The first author was supported by  the National Natural Science Foundation of China (No.12201361) and Doctoral Research Initiation Fund of Shandong University of Technology (No.423002). The second author was supported by  the Doctoral Research Initiation Fund of Shandong University of Technology (No.422030).
\end{acknowledgement}

\bigskip

\end{document}